\newtheorem{theorem}{Theorem}[section]
\newtheorem{lemma}[theorem]{Lemma}
\newtheorem{proposition}[theorem]{Proposition}
\theoremstyle{definition}
\newtheorem{definition}[theorem]{Definition}
\theoremstyle{remark}
\newtheorem*{rem}{Remark}
\newcommand\C{\mathbb{C}}
\newcommand\K{\mathbb{K}}
\newcommand\R{\mathbb{R}}
\begin{document}

\title{Fold singularities of the maps associated with Milnor fibrations for mixed polynomials}
\author{Daiki Sumida}
\date{}
\maketitle
\begin{abstract}
Milnor fibrations were extended by Mutsuo Oka for certain mixed polynomial. In this paper, we study singular points of differentiable maps into the 2-dimensional torus, called Milnor fibration product maps, obtained by several Milnor fibrations for mixed polynomial. We give a characterization of singular points of such product maps, and for the case of certain polar weighted homogeneous polynomials, a criterion for a fold singular point.
\end{abstract}
\section{Introduction}

\hspace{5mm}Let $S_\varepsilon$ be the sphere centered at the origin with radius $\varepsilon>0$, let $K_f$ be the intersection of the sphere $S_\varepsilon$ and the complex hypersurface $f^{-1}(0)$ where $f$ is an $n$ variable complex polynomial (resp. $K_g$, $K_{fg}$). Then
\begin{eqnarray*}
\phi_f = \frac{f}{|f|}:S_\varepsilon \setminus K_f \to S^1
\end{eqnarray*}
is a locally trivial fibration which is called {\it Milnor fibration} for sufficiently small radius $\varepsilon$. Under the theory of Milnor fibration, we studied the singularity of the $C^{\infty}$ maps which is called {\it Milnor fibration product map} (MFPM for short) defined by
\begin{eqnarray*}
\Phi = \left(\frac{f}{|f|},\frac{g}{|g|}\right):S_\varepsilon \setminus K_{fg} \to S^1\times S^1 = T^2
\end{eqnarray*}
in our doctoral report. Especially, we gave a necessary and sufficient condition for a point to be a singular point of the Milnor fibration product map and moreover gave a necessary and sufficient condition for a singular point of the Milnor fibration product map to be a fold point about certain weighted homogeneous polynomials $f$ and $g$.

In this paper, we could extend from the result about the singularity of the Milnor fibration product map for complex polynomials to it for mixed polynomials to use similarly technique in the original result. At first we give a necessary and sufficient condition for a point to be a singular point of the Milnor fibration product map associated with mixed polynomials $f$ and $g$ in general (Proposition \ref{2}). Next we refine this condition in certain polar weighted homogeneous polynomials $f$ and $g$ (Proposition \ref{3}). Mutsuo Oka gave a necessary condition for a mixed polynomial $f$, $\phi_f : S_\varepsilon \setminus K_f \to S^1$ become a fibration for sufficiently small radius $\varepsilon$ like Milnor fibration with complex polynomial $f$. But for generally mixed polynomial $f$, $\phi_f : S_\varepsilon \setminus K_f \to S^1$ may not become fibration. In the case of polar weighted homogeneous polynomials $f$ and $g$ such that $\phi_f : S_\varepsilon \setminus K_f \to S^1$ and $\phi_g : S_\varepsilon \setminus K_g \to S^1$ are fibrations (which include the case that Oka gave),  we give a necessary and sufficient condition for a singular point of the Milnor fibration product map associated with $f$ and $g$ to be a fold point (Theorem \ref{4}). This theorem is main result in this paper.

\section{Preliminaries}

\subsection{Notation}

At first, we let some definitions as Oka \cite{Oka3}.

\begin{definition}
We consider a complex polynomial $f(z,\overline{z})=\sum_{\nu,\mu}c_{\nu,\mu}z^{\nu}\overline{z}^{\mu}$ where $z=\left(z_1,\ldots,z_n\right), \overline{z} = \left(\overline{z_1},\ldots,\overline{z_n}\right), z^{\nu} = z_1^{\nu_1}\cdots z_n^{\nu_{n}}$ for $\nu=\left(\nu_1,\ldots,\nu_{n}\right)$, $\overline{z}^{\mu}=\overline{z_1}^{\mu_1}\cdots\overline{z_n}^{\mu_{n}}$ for $\mu=\left(\mu_1,\ldots,\mu_{n}\right)$. We call $f(z,\overline{z})$ a {\it mixed polynomial} of $n$ variables $z_1,\ldots,z_n$.
Let $w=(w_1,\ldots,w_n)$ be a integer vector satisfying ${\rm gcd}(w_1,\ldots,w_n)=1$, let $M$ be a non-zero monomial defined by $M=c_{\nu,\mu} z^\nu\overline{z}^\mu$ and let $d$ be a positive integer. Put ${\rm rdeg}_w M=\sum_{j=1}^n w_j(\nu_j+\mu_j)$ and put ${\rm pdeg}_w M=\sum_{j=1}^n w_j(\nu_j-\mu_j)$. A polynomial $f(z,\overline{z})$ is called a {\it radially weighted homogeneous polynomial} with {\it type} $(w;d)$ if ${\rm rdeg}_w M=d$ for every non-zero monomial $M$ of $f(z,\overline{z})$. The vector $w$ is called the {\it radial weight vector}, $d$ is called {\it radial degree}. Respectively a polynomial $f(z,\overline{z})$ is called a {\it polar weighted homogeneous polynomial} with {\it type} $(w;d)$ if ${\rm pdeg}_w M=d$ for every non-zero monomial $M$ of $f(z,\overline{z})$. The vector $w$ is called the {\it polar weight vector}, $d$ is called {\it polar degree}. Note that $f(z,\overline{z})$ be a polar weighted homogeneous polynomial with type $(w;d)$ if and only if we have

\begin{eqnarray*}
f\left(\lambda^{w_1}z_1,\ldots,\lambda^{w_{n}}z_n,\overline{\lambda}^{w_1}\overline{z_1},\ldots,\overline{\lambda}^{w_{n}}\overline{z_n}\right)=\lambda^{d}f\left(z,\overline{z}\right),\ \lambda \in \C,\ |\lambda|=1.
\end{eqnarray*}
\end{definition}

In this paper, we use the following notations.

\begin{definition}
For a holomorphic function $h$ of $z_1,\ldots,z_n,\overline{z_1},\ldots,\overline{z_n}$, we define
\begin{eqnarray*}
&&dh = \left(\frac{\partial h}{\partial z_1} , \ldots , \frac{\partial h}{\partial z_n}\right),\ \overline{d}h = \left(\frac{\partial h}{\partial\overline{z_1}}, \ldots , \frac{\partial h}{\partial\overline{z_n}}\right),\\
&&H^{\alpha,\beta}(h)=\left(\frac{\partial^2 h}{\partial \alpha_j \partial \beta_k}\right)_{j,k},\ H_p^{\alpha, \beta}(h) = \lim_{z\to p} H^{\alpha,\beta}(h),\\
&&v_h(z) = i(\overline{d\log h}-\overline{d}\log h)(z,\overline{z}).
\end{eqnarray*}
\end{definition}

\begin{definition}
Let $O$ be the origin, let ${}^t V$ be the transposed matrix of $V$, let $\overline{V}$ be the complex conjugate matrix of $V$, let $\textrm{Re}V$ be the real part matrix of $V$, let $\K^{m*}=\K^m \setminus \{O\}$ for $\K=\R$ or $\C$, let $\R^{+m}=[0,\infty)^m$. We denote $\left\langle\cdot,\cdot\right\rangle$ the Hermitian inner product.
\end{definition}

\subsection{Milnor fibration for mixed polynomial}

In this subsection, we introduce some results about Milnor fibration for mixed polynomial. We agree with the definition of Oka \cite{Oka3}.

\begin{definition}
Let $S_\varepsilon$ be the sphere defined by $S_\varepsilon = \{\left. z \in \C^n\,\right|\,\|z\|=\varepsilon\}$, let $f(z,\overline{z})$ be a mixed polynomial with $O \in f^{-1}(0)$, let $K_f$ be the intersection of the sphere $S_\varepsilon$ and the {\it mixed hypersurface} $f^{-1}(0)$,
 let $\phi_f$ be a map defined by $\phi_f = f/|f| : S_\varepsilon \setminus K_f \to S^1$.
\end{definition}

Oka gave a necessary and sufficient condition for a point to be a critical point of the map $\phi_f$.

\begin{lemma}
A point $p \in S_\varepsilon \setminus K_f$ is a singular point of $\phi_f$ if and only if $p, v_f(p)$ are linearly dependent over $\R$.
\end{lemma}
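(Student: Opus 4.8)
The plan is to reduce the singularity condition for $\phi_f$ to the Euclidean geometry of the argument function on $S_\varepsilon$, and then to recognize the relevant gradient as $v_f$.

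First I would write $\phi_f=e^{i\theta}$, where $\theta=\arg f=\operatorname{Im}\log f$ is a locally defined smooth real function on $S_\varepsilon\setminus K_f$ (its being multivalued is harmless, since criticality is a local notion). As $\lambda\mapsto e^{i\lambda}$ is a local diffeomorphism $\R\to S^1$, the point $p$ is a singular point of $\phi_f$ restricted to $S_\varepsilon\setminus K_f$ if and only if $p$ is a critical point of the restriction $\theta|_{S_\varepsilon}$. By the standard characterization of critical points of a function on a submanifold, this holds exactly when the Euclidean gradient $\nabla\theta(p)$ in $\R^{2n}\cong\C^n$ is orthogonal to the tangent space $T_pS_\varepsilon$.

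Next I would identify $\nabla\theta(p)$ with $v_f(p)$. Writing $\theta=\tfrac1{2i}\bigl(\log f-\log\overline f\bigr)$ and using $\partial\overline f/\partial\overline{z_j}=\overline{\partial f/\partial z_j}$, a direct Wirtinger computation gives $\partial\theta/\partial\overline{z_j}=\tfrac{i}{2}\bigl(\tfrac1{\overline f}\,\overline{\partial f/\partial z_j}-\tfrac1{f}\,\partial f/\partial\overline{z_j}\bigr)$. Under the identification $\R^{2n}\cong\C^n$, the $j$-th complex component of the Euclidean gradient of a real function $\theta$ is $\partial\theta/\partial x_j+i\,\partial\theta/\partial y_j=2\,\partial\theta/\partial\overline{z_j}$; comparing this componentwise with $v_f=i\bigl(\overline{d\log f}-\overline{d}\log f\bigr)$ yields $\nabla\theta=v_f$ as vectors in $\C^n$. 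Keeping track of the factors of $i$ and of the conjugations in this identification is the one place I expect to need genuine care.

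Finally I would compute the normal direction of the sphere. Since $S_\varepsilon=\{\,\|z\|^2=\varepsilon^2\,\}$ and the Euclidean gradient of $\|z\|^2$ at $p$ is the radial vector $2p$, the normal line $(T_pS_\varepsilon)^{\perp}$ equals $\R p$. Therefore $\nabla\theta(p)=v_f(p)$ is orthogonal to $T_pS_\varepsilon$ if and only if $v_f(p)\in\R p$, i.e.\ $v_f(p)=c\,p$ for some $c\in\R$ (the case $c=0$ included). Since $p\neq O$, this is precisely the condition that $p$ and $v_f(p)$ be linearly dependent over $\R$, which proves the lemma.
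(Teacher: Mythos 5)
Your proposal is correct: the identification $\nabla\theta=v_f$ via $2\,\partial\theta/\partial\overline{z_j}$ checks out (it is consistent with the paper's formula $d_p\phi_f(v)=i\,{\rm Re}\langle v,v_f(p)\rangle\,\phi_f(p)$), and the reduction to orthogonality against $T_pS_\varepsilon$ with normal line $\R p$ is exactly the right mechanism. The paper itself quotes this lemma from Oka without proof, but its proof of the two-map analogue (Proposition \ref{2}) runs the same computation, merely phrased by differentiating $\theta={\rm Re}(-i\log f)$ along curves rather than through the Euclidean gradient, so your route is essentially the same argument.
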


Moreover, Oka decide a necessary condition for a mixed polynomial $f$, $\phi_f : S_\varepsilon \setminus K_f \to S^1$ becomes a fibration to use Newton boundary.

\begin{definition}[Newton boundary of a mixed polynomial]
For a mixed polynomial $f(z,\overline{z})=\sum_{\nu,\mu}c_{\nu,\mu}z^\nu\overline{z}^\mu$, the {\it radial Newton polygon} ({\it Newton polygon} for short) $\Gamma_+(f;z,\overline{z})$ of a mixed polynomial $f(z,\overline{z})$ which is defined by the convex hull of
\begin{eqnarray*}
\bigcup_{c_{\nu,\mu}\neq 0}\left(\nu+\mu\right) + \R^{+n}.
\end{eqnarray*}
The {\it Newton boundary} $\Gamma \left(f;z,\overline{z}\right)$ is defined by the union of compact faces of $\Gamma_{+}\left(f;z,\overline{z}\right)$. For a given positive integer vector $w=(w_1,\ldots,w_n)$, we associate a linear function 
$\ell_w$ on $\Gamma(f)$ defined by 
$\ell_{w}\left(\nu\right)=\sum_{j=1}^{n}w_{j}\nu_{j}$ for $\nu \in \Gamma(f)$ and let $\Delta\left(w,f\right)=\Delta\left(w\right)$ be the face where $\ell_w$ takes its minimal value. We denote by $N$ the set of integer weight vectors and denote a vector $w \in N$ by a column vectors. We denote by $N^+,\ N^{++}$ the subset of positive or strictly positive weight vectors respectively. Thus $w={}^{t}(w_1,\ldots,w_n) \in N^{++}$ (respectively $w\in N^+$) if and only if $w_i>0$ (respectively $w_i \geq 0$) for any $i=1,\ldots,n$. Let
\begin{eqnarray*}
d(w)=d\left(w;f\right)=\min\left\{\left. {\rm rdeg}_{w} z^{\nu} \overline{z}^{\mu}\ \right|\ c_{\nu,\mu}\neq 0\right\}.
\end{eqnarray*}
For a positive weight $w$, we define {\it the face function} $f_w(z,\overline{z})$ by
\begin{eqnarray*}
f_{w}\left(z,\overline{z}\right)=\sum_{\nu+\mu\in\Delta\left(w\right)}c_{\nu,\mu}z^{\nu}\overline{z}^{\mu}.
\end{eqnarray*}
\end{definition}

\begin{definition}[Non-degenerate mixed polynomial]
For $w \in N^{++}$, the face function $f_w(z,\overline{z})$ is a radially weighted homogeneous polynomial of type $(w;d)$ with $d=d(w;f)$. Let $w$ be a strictly positive weight vector. We say that $f(z,\overline{z})$ is {\it non-degenerate} for $w$, if the fiber $f_w^{-1}(0)\, \cap\, \C^{*n}$ contains no critical point of the mapping $f_w : \C^{*n} \to \C$. We say that $f(z,\overline{z})$ is \textit{strongly non-degenerate} for $w$ if the mapping $f_w : \C^{*n} \to \C$ has no critical points. If $\dim\Delta\left(w\right)\geq 1$, we further assume that $f_w : \C^{*n} \to \C$ is surjective onto $\C$. A mixed polynomial $f(z,\overline{z})$ is called {\it non-degenerate} (respectively {\it strongly non-degenerate}) if $f(z,\overline{z})$ is non-degenerate (respectively {\it strongly non-degenerate}) for any strictly positive weight vector $w$. We say that a mixed polynomial $f(z,\overline{z})$ is a {\it true non-degenerate function} if it satisfies further the non-emptiness condition:
\begin{center}
$(NE)$ : $\forall w \in N^{++}$ with $\dim\Delta\left(w,f\right)\geq 1$, the fiber $f_w^{-1}(0)\, \cap\, \C^{*n} \neq \emptyset$.
\end{center}
\end{definition}
\begin{rem}
Assume that $f(z)$ is a holomorphic function. Then $f_w(z)$ is a weighted homogeneous polynomial and we have the Euler equality:
\begin{eqnarray*}
d\left(w;f\right)f_{w}\left(z\right)=\sum_{j=1}^{n}w_{j}z_j\frac{\partial f_{w}}{\partial z_j}\left(z\right). 
\end{eqnarray*}
Therefore $f_w : \C^{*n} \to \C$ has no critical point over $\C^*$. Thus $f(z,\overline{z})$ is non-degenerate for $w$ implies $f(z,\overline{z})$ is strongly non-degenerate for $w$. This is also the case if $f_w(z,\overline{z})$ is a polar weighted homogeneous polynomial.
\end{rem}

At last, we give a class of mixed polynomial.

\begin{definition}
We call a mixed polynomial $f(z,\overline{z})$ \textit{good} if there exists a positive number $\varepsilon_0$ such that for every $\varepsilon$ with $0<\varepsilon \leq \varepsilon$, $\phi_f = f/|f| : S_\varepsilon \setminus K_f \to S^1$ has no singular point. Note that the strongly non-degenerate mixed polynomials is an example of good mixed polynomial. For a good mixed polynomial and sufficiently small radius $\varepsilon$, $\phi_f = f/|f| : S_\varepsilon \setminus K_f \to S^1$ becomes a fibration which is called \textit{Milnor fibration} and every fiber is called \textit{Milnor fiber}. 
\end{definition}

\section{Results}

In this section, we give new results for the singularities of the Milnor fibration product map for mixed polynomial.
\begin{definition}
Let $f,g$ be mixed polynomials. We call $\Phi = (f/|f|,g/|g|) : S_\varepsilon \setminus K_{fg} \to T^2$ \textit{Milnor fibration product map associated with $f,g$}.
\end{definition}
\begin{proposition}\label{2}
A point $p \in S_\varepsilon \setminus K_{fg}$ is a singular point of the Milnor fibration product map $\Phi$ associated with $f,g$ if and only if $p,v_f(p),v_g(p)$ are linearly dependent over $\R$.
\end{proposition}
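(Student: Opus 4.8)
The plan is to reduce the statement to linear algebra on the tangent space $T_pS_\varepsilon$, regarding $\C^n$ as $\R^{2n}$ with the real inner product $\langle\cdot,\cdot\rangle_\R=\mathrm{Re}\langle\cdot,\cdot\rangle$. Since $\dim T^2=2$, the point $p$ is singular for $\Phi=(\phi_f,\phi_g)$ precisely when $d\Phi_p$ fails to have rank $2$, that is, when the two restricted component differentials $d(\phi_f)_p|_{T_pS_\varepsilon}$ and $d(\phi_g)_p|_{T_pS_\varepsilon}$ are linearly dependent over $\R$ in $(T_pS_\varepsilon)^*$. Writing $\phi_h=e^{i\arg h}$ for $h\in\{f,g\}$ and identifying $T_{\phi_h(p)}S^1$ with $\R$ via the basis $i\phi_h(p)$, one has $d\phi_h=i\phi_h\,d\arg h$, so this component differential is simply the $1$-form $d\arg h|_{T_pS_\varepsilon}$.

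The key input, already underlying the single-variable criterion recalled above, is that $v_h(p)$ is the metric dual of $d\arg h$ with respect to $\langle\cdot,\cdot\rangle_\R$, i.e. $d\arg h(X)=\langle X,v_h(p)\rangle_\R$ for every $X\in\C^n$. I would derive this by a direct computation from $\arg h=\tfrac{1}{2i}(\log h-\overline{\log h})$: one finds $\partial(\arg h)/\partial z_j=\tfrac{1}{2i}\bigl((d\log h)_j-\overline{(\overline{d}\log h)_j}\bigr)$, so that $2\,\overline{\partial(\arg h)/\partial z_j}=i\bigl(\overline{(d\log h)_j}-(\overline{d}\log h)_j\bigr)$, which is exactly the $j$-th component of $v_h$; pairing against a tangent vector then gives the stated identity. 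Consequently, $\Phi$ is singular at $p$ if and only if the functionals $\ell_f=\langle\cdot,v_f(p)\rangle_\R|_{T_pS_\varepsilon}$ and $\ell_g=\langle\cdot,v_g(p)\rangle_\R|_{T_pS_\varepsilon}$ are linearly dependent over $\R$.

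To convert this into the asserted condition I would introduce the $\R$-linear restriction map $\Psi:\C^n\to(T_pS_\varepsilon)^*$, $\Psi(v)=\langle\cdot,v\rangle_\R|_{T_pS_\varepsilon}$, so that $\ell_f=\Psi(v_f(p))$ and $\ell_g=\Psi(v_g(p))$. Because $T_pS_\varepsilon=\{X:\langle X,p\rangle_\R=0\}=(\R p)^\perp$, the kernel of $\Psi$ is exactly $(T_pS_\varepsilon)^\perp=\R p$. Hence $\ell_f,\ell_g$ are linearly dependent if and only if there is $(a,b)\neq(0,0)$ with $a\,v_f(p)+b\,v_g(p)\in\ker\Psi=\R p$, i.e. $a\,v_f(p)+b\,v_g(p)=c\,p$ for some $c\in\R$; this is a nontrivial real linear relation among $p,v_f(p),v_g(p)$.

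It remains to confirm the reverse direction without losing a case. Any nontrivial relation $\alpha p+\beta v_f(p)+\gamma v_g(p)=0$ must have $(\beta,\gamma)\neq(0,0)$, since otherwise $\alpha p=0$ with $\alpha\neq 0$ would force $p=0$, contradicting $\|p\|=\varepsilon>0$; rewriting it as $\beta v_f(p)+\gamma v_g(p)=-\alpha p\in\R p$ then shows $\ell_f,\ell_g$ are dependent. Thus linear dependence of $p,v_f(p),v_g(p)$ over $\R$ is equivalent to $p$ being singular for $\Phi$, as claimed. The one delicate point is the gradient identity of the second paragraph; everything else is linear algebra, with the nonvanishing of $p$ on $S_\varepsilon$ being precisely what excludes the spurious relation supported on $p$ alone.
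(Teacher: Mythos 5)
Your proof is correct and takes essentially the same approach as the paper: both arguments establish that the differential of each argument map is $X\mapsto \mathrm{Re}\left\langle X, v_h(p)\right\rangle$ (the paper by differentiating along curves, you by Wirtinger calculus applied to $\arg h$), and both then reduce singularity of $\Phi$ to the existence of a nontrivial relation $\beta v_f(p)+\gamma v_g(p)\in (T_pS_\varepsilon)^{\perp}=\R p$. If anything, your explicit treatment of the converse direction---using $p\neq 0$ to rule out a relation supported on $p$ alone---is slightly more careful than the paper's terse statement of that step.
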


\begin{proof}
For a real analytic curve $c : (-\delta,\delta) \to S_\varepsilon \setminus K_{fg}$ with $c(0)=p$ and the tangent vector $v_c = dc/dt|_{t=0}$, we have
\begin{eqnarray*}
\left. \frac{d}{dt} \phi_f\left(c(t)\right)\right|_{t=0}
&=&\left. \frac{d}{dt}\exp\left(i{\rm Re}\left(-i\log f\left(c\left(t\right),\overline{c\left(t\right)}\right)\right)\right)\right|_{t=0}\\
&=&i\left. \frac{d}{dt}{\rm Re}\left(-i\log f\left(c\left(t\right),\overline{c\left(t\right)}\right)\right)\right|_{t=0}\frac{f(p,\overline{p})}{\left|f(p,\overline{p})\right|}\\
&=&i{\rm Re}\left\langle v_c,i\left(\overline{d\log f}-\overline{d}\log f\right)(p,\overline{p})\right\rangle\frac{f(p,\overline{p})}{\left|f(p,\overline{p})\right|}
\end{eqnarray*}
(respectively $d\phi_g(c(t))/dt|_{t=0}$). Moreover for $v \in T_p\left(S_\varepsilon \setminus K_{fg}\right)$, we obtain
\begin{eqnarray*}
d_p\Phi(v)=\left(i{\rm Re}\left\langle v,v_f(p)\right\rangle\frac{f(p,\overline{p})}{\left|f(p,\overline{p})\right|},i{\rm Re}\left\langle v,v_g(p)\right\rangle\frac{g(p,\overline{p})}{\left|g(p,\overline{p})\right|}\right).
\end{eqnarray*}
Therefore it follows that $p$ is a singular point of $\Phi$ if and only if there exists $(\beta,\gamma) \in \R^{2*}$ such that for any $v \in T_p \left(S_\varepsilon \setminus K_{fg}\right)$, it follows that
\begin{eqnarray*}
{\rm Re}\left\langle v,\beta v_f(p) + \gamma v_g(p) \right\rangle=\beta{\rm Re}\left\langle v, v_f(p) \right\rangle + \gamma {\rm Re}\left\langle v, \gamma v_g(p) \right\rangle=0.
\end{eqnarray*}
Hence we have $p,v_f(p),v_g(p)$ are linearly dependent over $\R$.

Conversely if $p,v_f(p),v_g(p)$ are linearly dependent over $\R$, then there exists $(\beta,\gamma) \in \R^{2*}$ such that for every $v \in T_p \left(S_\varepsilon \setminus K_{fg}\right)$, it follows that $\beta{\rm Re}\left\langle v, v_f(p) \right\rangle + \gamma {\rm Re}\left\langle v, v_g(p) \right\rangle=0$. With the above, we have the desired conclusion.
\end{proof}

\begin{proposition}\label{3}
Let $f,g$ be polar weighted homogeneous polynomials with type $(w_f;d_f),(w_g;d_g)$ respectively. Suppose that $w_f$ is a strictly positive vector or a strictly negative vector (respectively $w_g$) and $d_g w_f = s d_f w_g$. A point $p$ is a singular point of the Milnor fibration product map $\Phi$ associated with $f,g$ if and only if $v_f(p),v_g(p)$ are linearly dependent over $\C$.
\end{proposition}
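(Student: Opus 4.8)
The plan is to deduce the equivalence from Proposition~\ref{2} by using polar weighted homogeneity to \emph{eliminate} $p$ from the relation of linear dependence, so that the $\R$-dependence of $p,v_f(p),v_g(p)$ is forced to originate from a $\C$-dependence of $v_f(p),v_g(p)$ alone. The engine is the infinitesimal $S^1$-equivariance: writing $\lambda=e^{i\theta}$ and differentiating the defining relation of polar homogeneity at $\theta=0$ gives the polar Euler identity
\[
\sum_{j=1}^{n}w_{f,j}\Bigl(z_j\frac{\partial f}{\partial z_j}-\overline{z_j}\frac{\partial f}{\partial \overline{z_j}}\Bigr)=d_f\,f ,
\]
and similarly for $g$. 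Setting $W_f=\mathrm{diag}(w_{f,1},\dots,w_{f,n})$, I would divide this identity by $f$, rewrite the quotients as $\partial_{z_j}\log f$ and $\partial_{\overline{z_j}}\log f$, and match the result against the definition $v_f=i(\overline{d\log f}-\overline{d}\log f)$.

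The key claim I want to extract is that the Hermitian pairing is purely imaginary with prescribed value,
\[
\langle W_f p, v_f(p)\rangle=-i\,d_f ,\qquad \langle W_g p, v_g(p)\rangle=-i\,d_g .
\]
Establishing this is the step I expect to be the main obstacle: after dividing the Euler identity by $f$ the imaginary part $-d_f$ appears at once, but one must still check that the holomorphic and anti-holomorphic contributions to the \emph{real} part of $\langle W_f p,v_f(p)\rangle$ cancel. Granting it, the proportionality hypothesis $d_g w_f=s\,d_f\,w_g$ reads $W_g=\tfrac{d_g}{s d_f}W_f$, so both identities may be referred to the single vector $W_f p$:
\[
\langle W_f p, v_f(p)\rangle=-i\,d_f,\qquad \langle W_f p, v_g(p)\rangle=-i\,s\,d_f .
\]
The strict-sign hypothesis on $w_f$ enters only through the nondegeneracy $\langle W_f p, p\rangle=\sum_{j}w_{f,j}|p_j|^2\neq 0$, which is precisely what makes the elimination of $p$ work.

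With these relations both implications are immediate. If $v_f(p),v_g(p)$ are $\C$-dependent, write $a\,v_f(p)+b\,v_g(p)=0$ with $(a,b)\neq(0,0)$; pairing against $W_f p$ gives $\overline a+s\,\overline b=0$, i.e. $a=-s\,b$, so that $b\,(v_g(p)-s\,v_f(p))=0$ and hence $v_g(p)=s\,v_f(p)$. This is a nontrivial real relation among $p,v_f(p),v_g(p)$, so $p$ is singular by Proposition~\ref{2}. Conversely, if $p$ is singular, take a nontrivial real relation $\alpha p+\beta v_f(p)+\gamma v_g(p)=0$; pairing with $W_f p$ and taking real parts annihilates the $v_f,v_g$ terms, since their pairings are purely imaginary, and leaves $\alpha\sum_j w_{f,j}|p_j|^2=0$, whence $\alpha=0$. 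Then $\beta v_f(p)+\gamma v_g(p)=0$ with $(\beta,\gamma)\neq(0,0)$, which is exactly the $\C$-dependence of $v_f(p),v_g(p)$.

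It remains to dispose of the degenerate cases. The value $\langle W_f p,v_f(p)\rangle=-i\,d_f\neq 0$ forces $v_f(p)\neq 0$ and in fact $v_f(p)\notin\R p$, since a real multiple of $p$ would give a real pairing; the same holds for $v_g(p)$. This legitimizes dividing by $b$ above and guarantees that the relations produced are genuinely nontrivial, completing the equivalence.
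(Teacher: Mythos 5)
Your plan is not a different route: it is the paper's proof in infinitesimal form. The paper introduces the one-parameter family $h_{f,t}(z)=\bigl(z_1e^{iw_{f,1}t/d_f},\ldots,z_ne^{iw_{f,n}t/d_f}\bigr)$, whose velocity $v(p)=dh_{f,t}(p)/dt|_{t=0}=\tfrac{i}{d_f}W_fp$ is exactly your vector $W_fp$ up to the factor $i/d_f$, and its three identities $\langle v(p),v_f(p)\rangle=1$, $\langle v(p),v_g(p)\rangle=s$ (via $h_{g,st}=h_{f,t}$, i.e.\ your $W_g=\tfrac{d_g}{sd_f}W_f$), and $-i\langle v(p),p\rangle=\tfrac{1}{d_f}\sum_jw_{f,j}|p_j|^2\gtrless 0$ are precisely your $\langle W_fp,v_f(p)\rangle=-id_f$, $\langle W_fp,v_g(p)\rangle=-isd_f$, and the nondegeneracy of $\langle W_fp,p\rangle$. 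The forward elimination of $\alpha$ by sign-definiteness and the converse (deduce $\beta'=-s\gamma'$, then produce a real relation --- the paper multiplies by $\overline{\gamma'}$, you divide by $b$ after excluding $b=0$) also coincide step by step. So the question reduces to your ``key claim.''

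And there is the genuine gap: you wrote ``Granting it'' at exactly the step you yourself identify as the main obstacle, and that step does not follow from polar homogeneity alone. Writing $A=\sum_jw_{f,j}p_j\,\partial_{z_j}\log f$ and $B=\sum_jw_{f,j}\overline{p_j}\,\partial_{\overline{z_j}}\log f$, the polar Euler identity divided by $f$ gives $A-B=d_f$, while unwinding $v_f=i(\overline{d\log f}-\overline{d}\log f)$ with $\langle a,b\rangle=\sum_ja_j\overline{b_j}$ gives
\begin{equation*}
\langle W_fp,v_f(p)\rangle=-i\bigl(A-\overline{B}\bigr)=-id_f+2\,\mathrm{Im}(B),
\end{equation*}
so your claim is equivalent to $B\in\R$ at $p$. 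That is automatic when $f$ is \emph{also} radially weighted homogeneous for the same weight vector (then $A+B$ equals the radial degree, so $B$ is a real constant), which is Oka's usual setting; but under the paper's definition, which imposes only the polar condition, it can fail: for $f=z_1^2\overline{z_1}+z_2$ with $w=(1,1)$, $d_f=1$, at $p=(1,i)$ one computes $B=1/(1+i)$, hence $\langle W_fp,v_f(p)\rangle=-1-i$, not $-i$. So what you postponed is not a routine check but the place where an extra structural input (radial homogeneity, or some substitute proving $\mathrm{Im}(B)=0$) must enter. It is worth knowing that the paper's own proof glosses over the very same point: it identifies the full Hermitian pairing $\langle v(p),v_f(p)\rangle$ with $\frac{d}{dt}\bigl(-i\log f(h_{f,t}(p),\overline{h_{f,t}(p)})\bigr)\big|_{t=0}$, an identification whose real parts agree for every curve (this is the computation in Proposition \ref{2}) but whose imaginary parts agree exactly when $\mathrm{Im}(B)=0$; the flow computation is the integrated form of the cancellation you granted. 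To complete your argument, add the radial homogeneity hypothesis and derive $B\in\R$ as above; everything downstream of the key claim in your write-up is correct.
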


\begin{proof}
Let $h_{f,t},h_{g,t} : S_\varepsilon \setminus K_{fg} \to S_\varepsilon \setminus K_{fg}$ be the diffeomorphisms defined by
\begin{eqnarray*}
h_{f,t}\left(z\right)&=&\left(z_1\exp\left(\frac{iw_{f,1}t}{d_{f}}\right),\ldots,z_n\exp\left(\frac{iw_{f,n}t}{d_{f}}\right)\right),\\
h_{g,t}\left(z\right)&=&\left(z_1\exp\left(\frac{iw_{g,1}t}{d_{g}}\right),\ldots,z_n\exp\left(\frac{iw_{g,n}t}{d_{g}}\right)\right)
\end{eqnarray*}
where $w_f=(w_{f,1},\ldots,w_{f,n}),\ w_g=(w_{g,1},\ldots,w_{g,n})$. Note that $h_{g,st}=h_{f,t}$. For a singular point $p$ of $\Phi$, let $(\alpha,\beta,\gamma) \in \R^{3*}$ be a vector satisfying
\begin{equation}
\alpha p + \beta v_f(p) + \gamma v_g(p) = 0. \label{eq:1}
\end{equation}
Let $v(p)$ be the tangent vector defined by $v(p)=dh_{f,t}(p)/dt|_{t=0}$. Then we have
\begin{eqnarray*}
&&-i \left\langle v(p),p\right\rangle=\frac{1}{d_{f}}\sum_{j=1}^{n}w_{f,j}\left|p_{j}\right|^{2}\gtrless 0,\\
&&\left\langle v(p),v_f(p)\right\rangle=1,
\ \left\langle v(p),v_g(p)\right\rangle= s
\end{eqnarray*}
since
\begin{eqnarray*}
\left\langle v(p),v_f(p)\right\rangle
&=&\left. \frac{d}{dt}\left(-i\log f\left(h_{f,t}(p),\overline{h_{f,t}(p)}\right)\right)\right|_{t=0}\\
&=&\left. \frac{d}{dt}\left(-i\log\left(e^{it}f(p,\overline{p})\right)\right)\right|_{t=0}=1,\\
\left\langle v(p),v_g(p)\right\rangle
&=&\left. \frac{d}{dt}\left(-i\log g\left(h_{f,t}(p),\overline{h_{f,t}(p)}\right)\right)\right|_{t=0}\\
&=&\left. \frac{d}{dt}\left(-i\log g\left(h_{g,st}(p),\overline{h_{g,st}(p)}\right)\right)\right|_{t=0}\\
&=&\left. \frac{d}{dt}\left(-i\log\left(e^{ist}g(p,\overline{p})\right)\right)\right|_{t=0}= s.
\end{eqnarray*}
From the equation (\ref{eq:1}), we have the following equation 
\begin{eqnarray*}
&& \frac{i \alpha}{d_{f}}\sum_{j=1}^{n}w_{f,j}\left|p_{j}\right|^{2}
+ \beta + \gamma s\\
&=& \alpha\left\langle v(p),p\right\rangle+\beta\left\langle v(p),v_f(p)\right\rangle+\gamma\left\langle v(p),v_g(p)\right\rangle\\
&=&\left\langle v(p),\alpha p+\beta v_f(p)+\gamma v_g(p)\right\rangle=0,
\end{eqnarray*}
moreover these follow that $\alpha=0,\ \gamma = -\beta s$ by comparing the real parts and the imaginary parts of both sides of the equation. Therefore we have that $v_f(p),v_g(p)$ are linearly dependent over $\C$.

Conversely, let $(\beta',\gamma') \in \C^{2*}$ be a vector satisfying $\beta' v_f(p) + \gamma' v_g(p) = 0$. From this equation, we have
\begin{eqnarray*}
\beta'+ \gamma' s
&=&\beta'\left\langle v_f(p),v(p)\right\rangle+\gamma'\left\langle v_g(p),v(p)\right\rangle\\
&=&\left\langle\beta'v_f(p)+\gamma'v_g(p),v(p)\right\rangle=0,
\end{eqnarray*}
moreover we have $\beta' = -\gamma' s \neq 0$. Therefore $\beta v_f(p) + \gamma v_g(p) = 0$ holds for $(\beta,\gamma) = \overline{\gamma'}(\beta',\gamma') = (-|\gamma'|^2 s , |\gamma'|^2) \in \R^{2*}$. This equation says that $v_f(p),v_g(p)$ are linearly dependent over $\R$, in other words $p$ is a singular point of $\Phi$. With the above, we have the desired conclusion.
\end{proof}

\begin{theorem}\label{4}
Assuming as in Proposition \ref{3} and suppose that polynomials $f,g$ are good. A singular point $p$ of $\Phi$ is a fold point of $\Phi$ if and only if the matrix $M_p(V)$:
\begin{eqnarray*}
&&{\rm Re}\left({}^t VH_p^{z,z}\left(-i\left(\log g-s\log f\right)\right)V\right)\\
&+&{\rm Re}\left({}^t VH_p^{z,\overline{z}}\left(-i\left(\log g-s\log f\right)\right)\overline{V}\right)\\
&+&{\rm Re}\left({}^t \overline{V}H_p^{\overline{z},z}\left(-i\left(\log g-s\log f\right)\right)V\right)\\
&+&{\rm Re}\left({}^t \overline{V}H_p^{\overline{z},\overline{z}}\left(-i\left(\log g-s\log f\right)\right)\overline{V}\right)
\end{eqnarray*}
is a regular matrix where $V = ({}^t v_1,\ldots,{}^t v_{2n-2})$ is an $n \times (2n-2)$ matrix such that $\{v_1,\ldots,v_{2n-2}\}$ is a real basis of $T_p F_f = \left\{v \in \C^n\,|\,{\rm Re}\left\langle v,p\right\rangle={\rm Re}\left\langle v,v_f(p)\right\rangle=0\right\}$.
\end{theorem}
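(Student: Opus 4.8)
Here is the plan of attack.

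\medskip

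The plan is to reduce the fold question for the torus-valued map $\Phi$ to a Morse-type non-degeneracy condition for a single real function on a neighborhood of $p$ in $\C^n$, and then to recognize that condition as the regularity of $M_p(V)$. First I would pin down the corank of $d_p\Phi$ at the given singular point $p$. Since $f$ is good, $\phi_f$ has no singular point, so by the lemma characterizing the singular points of $\phi_f$ the vectors $p$ and $v_f(p)$ are linearly independent over $\R$; hence the functional $v\mapsto{\rm Re}\langle v,v_f(p)\rangle$ is not identically zero on $T_p(S_\varepsilon\setminus K_{fg})=\{v\,|\,{\rm Re}\langle v,p\rangle=0\}$, and $d_p\Phi$ has rank at least $1$. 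As $p$ is singular its rank is at most $1$, so $d_p\Phi$ has corank exactly $1$. Moreover, the proof of Proposition \ref{3} shows that a singular point satisfies $v_g(p)=s\,v_f(p)$, whence ${\rm Re}\langle v,v_g(p)\rangle=s\,{\rm Re}\langle v,v_f(p)\rangle$ and therefore $\ker d_p\Phi=\{v\,|\,{\rm Re}\langle v,p\rangle={\rm Re}\langle v,v_f(p)\rangle=0\}=T_pF_f$, which is exactly the column span of $V$.

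\medskip

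Next I would invoke the classical criterion for corank-one maps: a corank-one singular point is a fold point if and only if the intrinsic second derivative, i.e.\ the Hessian of the cokernel component of the map restricted to $\ker d_p\Phi$, is a non-degenerate quadratic form. To make the cokernel component explicit, observe that $d_p\Phi(v)=\bigl({\rm Re}\langle v,v_f(p)\rangle\bigr)(iu_f,\,isu_g)$ with $u_f=f/|f|$, $u_g=g/|g|$, so the image of $d_p\Phi$ is the line of slope $s$ and is annihilated by the submersion $\pi:T^2\to\R$, $\pi(\theta_1,\theta_2)=\theta_2-s\theta_1$. Composing, $u:=\pi\circ\Phi=\arg g-s\arg f={\rm Im}(\log g-s\log f)={\rm Re}\bigl(-i(\log g-s\log f)\bigr)$, and since $d\pi$ restricts to an isomorphism on the one-dimensional cokernel, $p$ is a fold point of $\Phi$ if and only if the Hessian of $u$ restricted to $\ker d_p\Phi=T_pF_f$ is non-degenerate.

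\medskip

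The crux is to identify this restricted Hessian with $M_p(V)$, and here the relation $v_g(p)=s\,v_f(p)$ does the essential work. Because $d(\arg h)={\rm Re}\langle\,\cdot\,,v_h\rangle$ as a $1$-form on $\C^n$ for any $h$ (this is the computation already carried out in the proof of Proposition \ref{2}, which did not use $c\subset S_\varepsilon$), the ambient differential of $u$ at $p$ equals ${\rm Re}\langle\,\cdot\,,v_g(p)-s\,v_f(p)\rangle=0$. Thus $p$ is a genuine critical point of the ambient function $u$ on a neighborhood of $p$ in $\C^n$, its Hessian is a well-defined quadratic form there, and---there being no gradient term---its restriction to any subspace of $T_p\C^n$ equals the intrinsic Hessian of the restriction of $u$ to a submanifold tangent to that subspace. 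Applying this with $T_pF_f\subset T_pS_\varepsilon$ reduces the problem to expanding the real second directional derivative $D^2u(v,v)$ of $u={\rm Re}\bigl(-i(\log g-s\log f)\bigr)$ along a real vector $v\in\C^n$. A direct chain-rule computation, using that $u$ is real-valued so that ${H^{z,z}(u)=\tfrac12(H^{z,z}(\psi)+\overline{H^{\overline z,\overline z}(\psi)})}$ and similarly for the mixed block, groups the resulting terms in conjugate pairs and produces exactly the four $\,{\rm Re}(\cdots)\,$ summands displayed in the statement, with $\psi=-i(\log g-s\log f)$. Evaluating this bilinear form on the columns of $V$ yields precisely $M_p(V)$.

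\medskip

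I expect the main obstacle to lie in this last identification: one must check carefully that the intrinsic second derivative of the \emph{torus-valued} map $\Phi$, after passage through $\pi$ and restriction to $\ker d_p\Phi$, incurs no correction terms from the curvature of $S_\varepsilon$ or from the target $T^2$. The decisive simplification, which I would state at the outset, is that $v_g(p)=s\,v_f(p)$ forces the ambient differential of $u$ to vanish at $p$, so that all the restrictions commute and no second-fundamental-form contributions survive. Finally I would note that regularity of $M_p(V)$ is independent of the chosen real basis $\{v_1,\dots,v_{2n-2}\}$ of $T_pF_f$, since a change of basis replaces $M_p(V)$ by ${}^tP\,M_p(V)\,P$ for an invertible real matrix $P$ and hence preserves non-degeneracy; combining this with the corank-one fold criterion establishes both implications and completes the proof.
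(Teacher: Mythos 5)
Your proposal is correct and takes essentially the same route as the paper: reduce the fold condition to Morse non-degeneracy of ${\rm Re}\left(-i\left(\log g-s\log f\right)\right)$ on the Milnor fiber $F_f$ (whose tangent space at $p$ is $\ker d_p\Phi$), use the relation $v_g(p)=s\,v_f(p)$ at singular points to make all second-fundamental-form/curvature correction terms vanish, and expand the remaining ambient Hessian by the chain rule to obtain exactly the four ${\rm Re}(\cdots)$ blocks of $M_p(V)$, with regularity independent of the basis via the congruence ${}^t P\,M_p(V)\,P$. The only substantive difference is that where the paper cites \cite{Sumida} for the reduction ``$p$ is a fold point iff $p$ is a non-degenerate critical point of the argument map on $F_f$,'' you derive it yourself from the classical corank-one intrinsic-second-derivative criterion (pinning down $\ker d_p\Phi$ and the cokernel projection $\pi(\theta_1,\theta_2)=\theta_2-s\theta_1$), which makes the argument self-contained but does not change its structure.
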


\begin{proof}
Suppose that the radius $\varepsilon$ satisfies $\phi_f$ has no singular points. Then we have every Milnor fiber is a real $(2n-2)$ dimensional manifold. Let $(x_1,\ldots,x_{2n-1})$ be a local coordinates of $S_\varepsilon \setminus K_f$ around $p$ such that $(x_1,\ldots,x_{2n-2})$ be a local coordinate of $F_f$ around $p$. Then we have that $p \in S_\varepsilon \setminus K_{fg}$ with $p \in F_f$ is a fold point of $\Phi$ if and only if $p$ is a non-degenerate critical point of argument map ${\rm Re}\left(-i\left(\log g-s\log f\right)\right)$ on $F_f$ as \cite{Sumida}. Let $p$ be a singular point of $\Phi$ and let $(x_1,\ldots,x_{2n-2})$ be a local coordinates of $F_f$ around $p$. We have
\begin{eqnarray*}
&&\frac{\partial^{2}}{\partial x_{c}\partial x_{d}}{\rm Re}\left(-i\left(\log g-s\log f\right)\right)\\
&=&\frac{\partial}{\partial x_{d}}{\rm Re}\left\langle\frac{\partial z}{\partial x_{c}},v_g-sv_f\right\rangle\\
&=&{\rm Re}\left\langle\frac{\partial^{2}z}{\partial x_{c}\partial x_{d}},v_g-sv_f\right\rangle+{\rm Re}\left\langle\frac{\partial z}{\partial x_{c}},\frac{\partial}{\partial x_{d}}\left(v_g-sv_f\right)\right\rangle.
\end{eqnarray*}
Note that $v_g - s v_f$ vanishes at every singular point of $\Phi$. We get
\begin{eqnarray*}
v_g-sv_f&=&i\left(\overline{d\log g}-\overline{d}\log g\right)-si\left(\overline{d\log f}-\overline{d}\log f\right)\\
&=&i\overline{d\left(\log g-s\log f\right)}-i\overline{d}\left(\log g-s\log f\right),\\
\overline{v_g-sv_f}
&=&-i d\left(\log g-s\log f\right)+ i d\overline{\left(\log g-s\log f\right)}\\
&=&d (-i \left(\log g-s\log f\right)+ i\overline{\left(\log g-s\log f\right)})
\end{eqnarray*}
since $\overline{\overline{d}h}=d\overline{h}$ for a holomorphic function $h$.\\
Moreover it follows that
\begin{eqnarray*}
&&{\rm Re}\left\langle\frac{\partial z}{\partial x_{c}},\frac{\partial}{\partial x_{d}}\left(v_g-sv_f\right)\right\rangle\\
&=&{\rm Re} \sum_{a=1}^{n}\frac{\partial z_a}{\partial x_{c}}\sum_{b=1}^{n}\left(\frac{\partial z_b}{\partial x_{d}}\frac{\partial}{\partial z_b}+\frac{\partial\overline{z_b}}{\partial x_{d}}\frac{\partial}{\partial\overline{z_b}}\right)\frac{\partial}{\partial z_a}\left(-i\left(\log g-s\log f\right)\right)\\
&+&{\rm Re} \sum_{a=1}^{n}\frac{\partial z_a}{\partial x_{c}}\sum_{b=1}^{n}\left(\frac{\partial z_b}{\partial x_{d}}\frac{\partial}{\partial z_b}+\frac{\partial\overline{z_b}}{\partial x_{d}}\frac{\partial}{\partial\overline{z_b}}\right)\frac{\partial}{\partial z_a}\left(i\overline{\left(\log g-s\log f\right)}\right)\\
&=&{\rm Re} \sum_{a=1}^{n}\sum_{b=1}^{n}\left(\frac{\partial z_a}{\partial x_{c}}\frac{\partial z_b}{\partial x_{d}}\frac{\partial^{2}}{\partial z_a \partial z_b}+\frac{\partial z_a}{\partial x_{c}}\frac{\partial\overline{z_b}}{\partial x_{d}}\frac{\partial^{2}}{\partial z_a\partial\overline{z_b}}\right)\left(-i\left(\log g-s\log f\right)\right)\\
&+&{\rm Re} \sum_{a=1}^{n}\sum_{b=1}^{n}\left(\frac{\partial z_a}{\partial x_{c}}\frac{\partial z_b}{\partial x_{d}}\frac{\partial^{2}}{\partial z_a \partial z_b}+\frac{\partial z_a}{\partial x_{c}}\frac{\partial\overline{z_b}}{\partial x_{d}}\frac{\partial^{2}}{\partial z_a\partial\overline{z_b}}\right)\left(i\overline{\left(\log g-s\log f\right)}\right)\\
&=&{\rm Re} \sum_{a=1}^{n}\sum_{b=1}^{n}\left(\frac{\partial z_a}{\partial x_{c}}\frac{\partial z_b}{\partial x_{d}}\frac{\partial^{2}}{\partial z_a \partial z_b}+\frac{\partial z_a}{\partial x_{c}}\frac{\partial\overline{z_b}}{\partial x_{d}}\frac{\partial^{2}}{\partial z_a\partial\overline{z_b}}\right)\left(-i\left(\log g-s\log f\right)\right)\\
&+&{\rm Re} \sum_{a=1}^{n}\sum_{b=1}^{n}\left(\frac{\partial\overline{z_a}}{\partial x_{c}}\frac{\partial\overline{z_b}}{\partial x_{d}}\frac{\partial^{2}}{\partial\overline{z_a}\partial\overline{z_b}}+\frac{\partial\overline{z_a}}{\partial x_{c}}\frac{\partial z_b}{\partial x_{d}}\frac{\partial^{2}}{\partial\overline{z_a}\partial z_b}\right)\left(-i\left(\log g-s\log f\right)\right).
\end{eqnarray*}
Therefore we have
\begin{eqnarray*}
\left(\frac{\partial^{2}}{\partial x_{c}\partial x_{d}}{\rm Re}\left(-i\left(\log g-s\log f\right)\right)(0)\right)_{c,d=1}^{2n-2}
=M_p(V_0)
\end{eqnarray*}
where $V_{0}=\left(\partial z_j/\partial x_{k}(0)\right)_{j,k}$ is a full rank $n \times (2n-2)$ matrix. For every local coordinates $(x_1,\ldots,x_{2n-2})$, the rank of this Hessian matrix is constant. Therefore, for every real basis $\{v_1,\ldots,v_{2n-2}\}$ of $T_p F_f$ and $V=(v_1,\ldots,v_{2n-2})$, the rank of $M_p(V)$ is constant. We have desired conclusion.

\end{proof}

\end{document}